\documentclass{amsart}
%\usepackage{graphicx}
%\graphicspath{{ocfigures/}}
\usepackage{amsthm,amsfonts,amssymb,amsmath,amscd,graphics%,picins,amscd
	,newclude, enumitem,geometry,stmaryrd, fancyhdr
}
%\usepackage{amssymb,graphics,amsmath,amsthm,stmaryrd,enumerate,geometry,amsfonts}
%,showkeys,newclude 
\usepackage{textcomp}
\usepackage{appendix}
\usepackage{xcolor}
\usepackage{longtable}
\usepackage{url}
\usepackage[centerlast]{subfigure}
%\usepackage[firstpage]{draftwatermark}
%\SetWatermarkText{DRAFT}
%\SetWatermarkColor{gray}
%\SetWatermarkScale{5}
\usepackage[all,2cell, cmtip]{xy} \UseAllTwocells
%\pagestyle{plain}
%\AtEndDocument{
%	email: asharm24@kent.edu \\
%	address: Department of Mathematical sciences,  Kent State University,
%	Kent,OH
%}

\begin{document}
%	\begin{titlepage}
%		\begin{center}
%			\textbf{\uppercase{Compact closed categories and $\Gamma$-categories }}\\[0.015cm]
%			\textit{(with an appendix by André Joyal)} \\[1cm]
%			\Large{\uppercase{Amit Sharma}}\\[0.25cm]
%		
%		\end{center}
%
%		ABSTRACT: In this paper we extend the notion of compact closed categories to coherently commutative monoidal categories. We construct a model category of (permutative) compact closed categories and a model category of coherently compact closed categories which are manifestations of the aforementioned extension. We show that the Segal's nerve functor is a Quillen equivalence between the two model categories. The construction of a model category of coherently compact closed categories leads to a proof of the one dimensional cobordism hypothesis based on a purely homotopical algebra argument.
%		% is to show that the tensor product of symmetric monoidal categories restricts to a tensor product of compact closed categories.
%		%We discuss the enrichment of dagger compact closed categories.

%	\end{titlepage}

	\title[On Cofibrations of Permutative categories]{On Cofibrations of Permutative categories} 
%	(with an appendix by André Joyal)	
%}
% %\subtitle{}
	\author[A. Sharma]{Amit Sharma}
	\email{asharm24@kent.edu}
	\address {Department of mathematical sciences\\ Kent State University\\
		Kent,OH}
%%	\renewcommand{\maketitle}{\centering }
%%	\subtitle{}
%% \thanks{(with an appendix by André Joyal)}
%	\date{\today}
\begin{abstract}
	In this note we introduce a notion of free cofibrations of permutative categories. We show that each cofibration of permutative categories is a retract of a free cofibration.
	\end{abstract}
\date{\today}
%%%%%%%%%%%%%%%%%%%%%%%%%%%%%%%%%%%%%%%%%%%%%%%%%%%%%%%%%%%%%%%%%%%%%%%%%%%%%%%%
% Custom definitions
%%%%%%%%%%%%%%%%%%%%%%%%%%%%%%%%%%%%%%%%%%%%%%%%%%%%%%%%%%%%%%%%%%%%%%%%%%%%%%%%
% \newcommand{\C}{{\mathrm{c}}}
% 
% \newcommand{\COS}{{\mathrm{cos}}}
% \newcommand{\SIN}{{\mathrm{sin}}}
% \newcommand{\DOF}{{\mathrm{dof}}}
% \newcommand{\MSEC}{\mu{\mathrm{s}}}
% \newcommand{\NS}{\mathrm{ns}}
% \newcommand{\PPM}{{\mathrm{ppm}}}
% \newcommand{\ENR}{{\mathrm{GeV}}}
% \newcommand{\MOM}{{\mathrm{GeV/c}}}

\newcommand{\CONT}{\noindent}
\newcommand{\FIG}{Fig.\ }
\newcommand{\FIGS}{Figs.\ }
\newcommand{\SEC}{Sec.\ }
\newcommand{\SECS}{Secs.\ }
\newcommand{\TAB}{Table }
\newcommand{\TABS}{Tables }
\newcommand{\EQ}{Eq.\ }
\newcommand{\EQS}{Eqs.\ }
\newcommand{\APP}{Appendix }
\newcommand{\APPS}{Appendices }
\newcommand{\CHP}{Chapter }
\newcommand{\CHPS}{Chapters }

\newcommand{\OFF}{\emph{G2off}~}
\newcommand{\TOO}{\emph{G2Too}~}
%%%%%%%%%%%%%%%%%%%%%%%%%%%%%%%%%%%%%%%%%%%%%%%%%%%%%%%%%%%%%%%%%%%%%%%%%%%%%%%%
\newcommand{\CatS}{Cat_{\bigS}}
\newcommand{\PicS}{{\underline{\pic}}^{\oplus}}
%%%%%%%%%%%%%%%%%%%%%%%%%%%%%%%%%%%%%%%%%%%%%%%%%%%%%%%%%%%%%%%%%%%%%%%%%%%%%%%%
\newcommand{\HPicS}{{Hom^{\oplus}_{\pic}}}

\newtheorem{thm}{Theorem}[section]
\newtheorem{lem}[thm]{Lemma}
\newtheorem{conj}[thm]{Conjecture}
\newtheorem{coro}[thm]{Corollary}
\newtheorem{prop}[thm]{Proposition}

\theoremstyle{definition}
\newtheorem{df}[thm]{Definition}
\newtheorem{nota}[thm]{Notation}

\newtheorem{ex}[thm]{Example}
\newtheorem{exs}[thm]{Examples}

\theoremstyle{remark}
\newtheorem*{note}{Note}
\newtheorem{rem}{Remark}
\newtheorem{ack}{Acknowledgments}

\newcommand{\ChI}{{\textit{\v C}}\textit{ech}}
\newcommand{\Ch}{{\v C}ech}

\newcommand{\ChZG}{hermitian line $0$-gerbe}
\renewcommand{\theack}{$\! \! \!$}

\newcommand{\ChG}{flat hermitian line $1$-gerbe}
\newcommand{\ChC}{hermitian line $1$-cocycle}
\newcommand{\ChGG}{flat hermitian line $2$-gerbe}
\newcommand{\ChCC}{hermitian line $2$-cocycle}
\newcommand{\id}{id}
\newcommand{\LC}{\mathfrak{C}}
\newcommand{\Coker}{Coker}
\newcommand{\Com}{Com}
\newcommand{\Hom}{Hom}
\newcommand{\Mor}{Mor}
\newcommand{\Map}{Map}
\newcommand{\alg}{alg}
\newcommand{\an}{an}
\newcommand{\Ker}{Ker}
\newcommand{\Ob}{Ob}
\newcommand{\Proj}{\mathbf{Proj}}
\newcommand{\topo}{\mathbf{Top}}
\newcommand{\kan}{\mathcal{K}}
\newcommand{\pkan}{\mathcal{K}_\bullet}
\newcommand{\Kan}{\mathbf{Kan}}
\newcommand{\pKan}{\mathbf{Kan}_\bullet}
\newcommand{\QCat}{\mathbf{QCat}}
\newcommand{\gp}{\mathcal{A}_\infty}
\newcommand{\mdl}{\mathcal{M}\textit{odel}}
\newcommand{\sSets}{\mathbf{sSets}}
\newcommand{\sSetsQ}{(\mathbf{sSets, Q})}
\newcommand{\sSetsK}{(\mathbf{sSets, \Kan})}
\newcommand{\pSSets}{\mathbf{sSets}_\bullet}
\newcommand{\pSSetsK}{(\mathbf{sSets}_\bullet, \Kan)}
\newcommand{\pSSetsQ}{(\mathbf{sSets_\bullet, Q})}
\newcommand{\cyl}{\mathbf{Cyl}}
\newcommand{\lin}{\mathcal{L}_\infty}
\newcommand{\Vect}{\mathbf{Vect}}
\newcommand{\Aut}{Aut}
\newcommand{\pic}{\mathcal{P}\textit{ic}}
\newcommand{\Dlin}{\pic}
\newcommand{\bigS}{\mathbf{S}}
\newcommand{\bigA}{\mathbf{A}}
\newcommand{\bhom}{\mathbf{hom}}
\newcommand{\bhomK}{\mathbf{hom}({\textit{K}}^+,\textit{-})}
\newcommand{\Bhom}{\mathbf{Hom}}
\newcommand{\bhomk}{\mathbf{hom}^{{\textit{k}}^+}}
\newcommand{\Dlino}{\pic^{\textit{op}}}
\newcommand{\lino}{\mathcal{L}^{\textit{op}}_\infty}
\newcommand{\lind}{\mathcal{L}^\delta_\infty}
\newcommand{\linK}{\mathcal{L}_\infty(\kan)}
\newcommand{\linC}{\mathcal{L}_\infty\text{-category}}
\newcommand{\linCs}{\mathcal{L}_\infty\text{-categories}}
\newcommand{\ainCs}{\text{additive} \ \infty-\text{categories}}
\newcommand{\ainC}{\text{additive} \ \infty-\text{category}}
\newcommand{\inC}{\infty\text{-category}}
\newcommand{\inCs}{\infty\text{-categories}}
\newcommand{\gS}{{\Gamma}\text{-space}}
\newcommand{\gSet}{{\Gamma}\text{-set}}
\newcommand{\ggS}{\Gamma \times \Gamma\text{-space}}
\newcommand{\gSs}{\Gamma\text{-spaces}}
\newcommand{\gSets}{\Gamma\text{-sets}}
\newcommand{\ggSs}{\Gamma \times \Gamma\text{-spaces}}
\newcommand{\gO}{\Gamma-\text{object}}
\newcommand{\gSCat}{{\Gamma}\text{-space category}}
\newcommand{\pss}{\mathbf{S}_\bullet}
\newcommand{\gSC}{{{{\Gamma}}\mathcal{S}}}
\newcommand{\pGSC}{{{{\Gamma}}\mathcal{S}}_\bullet}
\newcommand{\pGSCStr}{{{{\Gamma}}\mathcal{S}}_\bullet^{\textit{str}}}
\newcommand{\ggSC}{{\Gamma\Gamma\mathcal{S}}}
\newcommand{\gSD}{\mathbf{D}(\gSC^{\textit{f}})}
\newcommand{\sCat}{\mathbf{sCat}}
\newcommand{\pSCat}{\mathbf{sCat}_\bullet}
\newcommand{\gSetCat}{{{{\Gamma}}\mathcal{S}\textit{et}}}
\newcommand{\Dhom}{\mathbf{R}Hom_{\pic}}
\newcommand{\gop}{\Gamma^{\textit{op}}}
\newcommand{\fU}{\mathbf{U}}
\newcommand{\cDN}{\underset{\mathbf{D}[\textit{n}^+]}{\circ}}
\newcommand{\cDK}{\underset{\mathbf{D}[\textit{k}^+]}{\circ}}
\newcommand{\cDL}{\underset{\mathbf{D}[\textit{l}^+]}{\circ}}
\newcommand{\cD}{\underset{\gSD}{\circ}}
\newcommand{\cDT}{\underset{\gSD}{\widetilde{\circ}}}
\newcommand{\ppsSets}{\sSets_{\bullet, \bullet}}
\newcommand{\gdHom}{\underline{Hom}_{\gSD}}
\newcommand{\HomU}{\underline{Hom}}
\newcommand{\ominf}{\Omega_\infty}
\newcommand{\ev}{ev}
\newcommand{\cu}{C(X;\mathfrak{U}_I)}
\newcommand{\Sing}{Sing}
\newcommand{\AlgEin}{\A\textit{lg}_{\E_\infty}}
\newcommand{\SFunc}[2]{\mathbf{SFunc}({#1} ; {#2})}
\newcommand{\unit}[1]{\mathrm{1}_{#1}}
\newcommand{\liminj}{\varinjlim}
\newcommand{\limproj}{\varprojlim}
\newcommand{\HMapC}[3]{\mathcal{M}\textit{ap}^{\textit{h}}_{#3}(#1, #2)}
\newcommand{\tensPGSR}[2]{#1 \underset{\gSR}\wedge #2}
\newcommand{\pTensP}[3]{#1 \underset{#3}\wedge #2}
\newcommand{\MGCat}[2]{\underline{\map}_{\gCAT}({#1},{ #2})}
\newcommand{\MGCatGen}[3]{\underline{\map}_{#3}({#1},{ #2})}
\newcommand{\MGBoxCat}[2]{\underline{\map}_{\gSC}^{\Box}({#1},{ #2})}
\newcommand{\TensPFunc}[1]{- \underset{#1} \otimes -}
\newcommand{\TensP}[3]{#1 \underset{#3}\otimes #2}
\newcommand{\MapC}[3]{\mathcal{M}\textit{ap}_{#3}(#1, #2)}
\newcommand{\bHom}[3]{{#2}^{#1}}
\newcommand{\gn}[1]{\Gamma^{#1}}
\newcommand{\gnk}[2]{\Gamma^{#1}({#2}^+)}
\newcommand{\gnf}[2]{\Gamma^{#1}({#2})}
\newcommand{\ggn}[1]{\Gamma\Gamma^{#1}}
\newcommand{\Nat}{\mathbb{N}}
\newcommand{\partition}[2]{\delta^{#1}_{#2}}
\newcommand{\inclusion}[2]{\iota^{#1}_{#2}}
\newcommand{\EinQC}{\text{coherently commutative monoidal quasi-category}} 
\newcommand{\EinQCs}{\text{coherently commutative monoidal quasi-categories}}
\newcommand{\pHomCat}[2]{[#1,#2]_{\bullet}}
\newcommand{\CatHom}[3]{[#1,#2]^{#3}}
\newcommand{\pCatHom}[3]{[#1,#2]_\bullet^{#3}}
\newcommand{\EinC}{\text{coherently commutative monoidal category}}
\newcommand{\EinCs}{\text{coherently commutative monoidal categories}}
\newcommand{\EinLO}{E_\infty{\text{- local object}}}
\newcommand{\EinSLO}{\E_\infty\S{\text{- local object}}}
\newcommand{\Ein}{E_\infty}
\newcommand{\EinS}{E_\infty{\text{- space}}}
\newcommand{\EinSs}{E_\infty{\text{- spaces}}}
\newcommand{\PCat}{\mathbf{Perm}}
\newcommand{\PCatCC}{\mathbf{Perm}^{\textit{cc}}}
\newcommand{\nor}[1]{{#1}^\textit{nor}}
\newcommand{\pSSetsHom}[3]{[#1,#2]_\bullet^{#3}}
\newcommand{\PNat}{\overline{\L}}
\newcommand{\PStr}{\L}
\newcommand{\Gn}[1]{\Gamma[#1]}
\newcommand{\GIH}{\Gamma\textit{H}_{\textit{in}}}
\newcommand{\QStr}[1]{\L_\bullet(\ud{#1})}
\newcommand{\QStrF}{\L_\bullet}
\newcommand{\Kbar}{\overline{\K}}
\newcommand{\gPerm}{{\Gamma\PCat}}
\newcommand{\gCat}{{\Gamma{\text{-category}}}}
\newcommand{\Du}{{\mathfrak{D}}}
\newcommand{\EHom}[2]{[#1; #2]^\E}
\newcommand{\Cob}[1]{{\mathfrak{Cob}}^{\textit{fr}}(\ud{#1})}
\newcommand{\gpd}{{\mathbf{Gpd}}}
\newcommand{\ol}[1]{\overline{#1}}
\newcommand{\gCob}[1]{\Gamma\Cob{#1}}
\newcommand{\gCatCC}{{\gCAT^{\textit{cc}}}}
\newcommand{\gCatSM}{{\gCAT^{\otimes}}}
\newcommand{\gPCat}{\Gamma\PCat}
\newcommand{\pGPCat}{\gPCat_\bullet}
\newcommand{\lCC}{\PStr^{\textit{cc}}}
\newcommand{\kCC}{\K^{\textit{cc}}}

\def\F{\mathcal F}
\def\Pic{\mathbf{2}\mathcal P\textit{ic}}
\def\nc{\mathbb C}

\def\Z{\mathbb Z}
\def\P{\mathbb P}
\def\J{\mathcal J}
\def\I{\mathcal I}
\def\nC{\mathbb C}
\def\H{\mathcal H}
\def\A{\mathcal A}
\def\C{\mathcal C}
\def\D{\mathcal D}
\def\E{\mathcal E}
\def\G{\mathcal G}
\def\B{\mathcal B}
\def\L{\mathcal L}
\def\U{\mathcal U}
\def\K{\mathcal K}

\def\M{\mathcal M}
\def\O{\mathcal O}
\def\R{\mathcal R}
\def\S{\mathcal S}
\def\N{\mathcal N}

\newcommand{\undertilde}[1]{\underset{\sim}{#1}}
\newcommand{\abs}[1]{{\lvert#1\rvert}}
\newcommand{\mC}[1]{\mathfrak{C}(#1)}
\newcommand{\sigInf}[1]{\Sigma^{\infty}{#1}}
\newcommand{\x}[4]{\underset{#1, #2}{ \overset{#3, #4} \prod }}
\newcommand{\mA}[2]{\textit{Add}^n_{#1, #2}}
\newcommand{\mAK}[2]{\textit{Add}^k_{#1, #2}}
\newcommand{\mAL}[2]{\textit{Add}^l_{#1, #2}}
\newcommand{\Mdl}[2]{\L_\infty}% \left(#1, #2 \right)}
\newcommand{\inv}[1]{#1^{-1}}
\newcommand{\Lan}[2]{\mathbf{Lan}_{#1}(#2)}
\newcommand{\ccCat}{\mathbf{cc}\PCat}
\newcommand{\fCC}{\F^{\textit{cc}}}

\newcommand{\del}{\partial}
\newcommand{\sCatO}{\mathcal{S}Cat_\O}
\newcommand{\FCgop}{\mathbf{F}\mC{N(\gop)}}
\newcommand{\hProd}{{\overset{h} \oplus}}
\newcommand{\hProdn}{\underset{n}{\overset{h} \oplus}}
\newcommand{\hProdk}[1]{\underset{#1}{\overset{h} \oplus}}
\newcommand{\map}{\mathcal{M}\textit{ap}}
\newcommand{\SMGS}[2]{\map_{\gSC}({#1},{ #2})}
\newcommand{\MGS}[2]{\underline{\map}_{\gSC}({#1},{ #2})}
\newcommand{\MGSBox}[2]{\underline{\map}^{\Box}_{\gSC}({#1},{ #2})}
\newcommand{\Aqcat}[1]{\underline{#1}^\oplus}
\newcommand{\Cat}{\mathbf{Cat}}
\newcommand{\Sp}{\mathbf{Sp}}
\newcommand{\SpStb}{\mathbf{Sp}^{\textit{stable}}}
\newcommand{\SpStr}{\mathbf{Sp}^{\textit{strict}}}
\newcommand{\Sspec}{\mathbb{S}}
\newcommand{\ud}[1]{\underline{#1}}
\newcommand{\inrt}{\mathbf{Inrt}}
\newcommand{\act}{\mathbf{Act}}
\newcommand{\StrSMHom}[2]{[#1,#2]_\otimes^{\textit{str}}}
\newcommand{\SMHom}[2]{[#1,#2]_\otimes}
\newcommand{\ESMHom}[2]{[#1,#2]_\otimes^\E}
\newcommand{\gCats}{\Gamma\text{-categories}}
\newcommand{\gCAT}{\Gamma\Cat}
\newcommand{\gCATCCM}{\Gamma\Cat^\otimes}
\newcommand{\KStr}[1]{\K(#1)}
\newcommand{\pGPCatStr}{\pGPCat^{\textit{Str}}}

	\maketitle
	%	\tableofcontents

%	\include{Introduction}
%	\include{ccPCat}
%	\include{CCPerm}
%	\include{CCGCat}
	
	% \include{BordCat}
 % \input{GCFunctor}
  % \input{TensPCCCat}
	
%	
%	\include{Duality}
%	\include{LocMdlCats}
%   \include{TransferMdlStr}
\section{Introduction}
A \emph{permutative} category is a symmetric monoidal category whose associativity and unit natural isomorphisms are identites. Permutative categories have generated significant interest in topology. An infinite loop space machine was constructed on permutative categories in \cite{May4}.
A $K$-theory (multi-)functor from a \emph{multicategory} of permutative categories into a symmetric monoidal category of symmetric spectra, which preserves the multiplicative structure, was constructed in \cite{Elmendorf2006RingsMA}. In \cite{mandell2}, the  $K$-theory of \cite{Elmendorf2006RingsMA} was enhanced to a lax symmetric monoidal functor. It was shown in \cite{mandell} that permutative categories model connective spectra.

 Every symmetric monoidal category is equivalent (by a symmetric monoidal functor) to a permutative category. The category of symmetric monoidal categopries $\mathbf{SMCAT}$ does NOT have a model category structure, however its subcategory of permutative categories and strict symmetric monoidal functors $\PCat$ can be endowded with a model category structure. The category $\PCat$ is isomorphic to the category of algebras over the (categorical) Barrat-Eccles operad. Using this fact, the a model category structure follows from \cite{BM1} and \cite[Thm. 4.5]{Lack}. This model category structure is called the natural model category structure of permutative categories.
 
 The main objective of this note is to identify a class of cofibrations in the natural model category $\PCat$, called \emph{free} cofibrations such that every cofibration in $\PCat$ is a retract of a free cofibration. A useful property of free cofibrations is that cobase changes along a free cofibration preserve acyclic fibrations in the natural model category $\PCat$.
 This property allows us to prove that the natural model category $\PCat$ is \emph{left proper}.
 
 \begin{ack}
 	The author is thankful to Andre Joyal for proposing the idea of a free-cofibration and also for many insightful discussions regarding this note.
 	\end{ack}
   \section{Cofibrations in $\PCat$ and left properness}

In this note we define a class of maps called \emph{free} cofibrations in the natural model category of permutative categories $\PCat$. We show that a strict symmetric monoidal functor is a cofibration in $\PCat$ if and only if it is a retract of a free cofibration.
Using this characterization of cofibrations we will show that the natural model category $\PCat$ is left proper. A characterization of  cofibrations in $\PCat$ was formulated, purely in terms of object functions (which are monoid homomorphisms) of the underlying strict symmetric monoidal functor, in \cite{Sharma}. In order to define free cofibrations, we will start by reviewing some basic notions of permutative categories:
\begin{df}
	\label{perm-cat}
	A symmetric monoidal category is called a \emph{permutative} category or a strict symmetric monoidal category if it is strictly associative and strictly unital.
	\end{df}
\begin{rem}
	A permutative category is an internal category in the category of monoids.
	\end{rem}
 We recall that the forgetful functor $U:\PCat \to \Cat$ has a left adjoint $\F:\Cat \to \PCat$.
 \begin{df}
 	A monoid $M$ is called a \emph{free} monoid if there exists a (dotted) lifting monoid homomorphism whenever we have the following (outer) commutative diagram of monoid homomorphisms:
 	\[
 	\xymatrix{
 	\ast \ar[r] \ar[d] & N \ar[d]^p \\
 	M \ar[r] \ar@{-->}[ru] & Q
 	}
 	\]
 	where $p$ is a surjective monoid homomorphism and $\ast$ is a \emph{zero} object in the  category of monoids.
 	\end{df}
\begin{df}
\label{Special-Cof}
A \emph{free cofibration} of permutative categories is a (strict symmetric monoidal) functor $i:A \to C$ whose object function is the inclusion $Ob(i):Ob(A) \to Ob(A) \vee M = Ob(C)$, where $M$ is a free monoid and the coproduct is taken in the category of monoids.
\end{df}

The next proposition presents the desired characterization of cofibrations:
\begin{prop}
\label{char-cof}
A strict symmetric monoidal functor $F:C \to D$ is a cofibration in $\PCat$ if and only if it is a retract of a free cofibration by a map that fixes $C$.
\end{prop}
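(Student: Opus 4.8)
The plan is to establish the two implications separately. For the ``if'' direction it suffices, since cofibrations are closed under retracts in any model category, to show that every free cofibration is a cofibration, i.e.\ that a free cofibration $i\colon A\to C$ has the left lifting property against every acyclic fibration $p\colon X\to Y$. Since acyclic fibrations in $\PCat$ are created by $U\colon\PCat\to\Cat$ and an acyclic fibration in $\Cat$ is a functor that is surjective on objects and fully faithful, the underlying functor of $p$ has these two properties. Given a lifting problem with top map $g\colon A\to X$ and bottom map $h\colon C\to Y$, I would build the lift $\ell\colon C\to X$ in two steps: on objects, $Ob(\ell)$ is forced to be $Ob(g)$ on the summand $Ob(A)$ of $Ob(C)=Ob(A)\vee M$, and on a chosen set of free generators of $M$ it may be defined by selecting, for each generator $s$, a preimage under $Ob(p)$ of $Ob(h)(s)$, the universal property of the coproduct $Ob(A)\vee M$ then producing a monoid homomorphism $Ob(\ell)$ with $p\circ Ob(\ell)=Ob(h)$; on morphisms, full faithfulness of $p$ determines $\ell$ uniquely on each hom-set, and a routine check shows the resulting $\ell$ is a strict symmetric monoidal functor solving the lifting problem. (Equivalently, one may note that the object function $Ob(A)\to Ob(A)\vee M$ of a free cofibration has exactly the form isolated by the object-function characterisation of cofibrations in \cite{Sharma}.)

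For the ``only if'' direction the crux is the following stability statement, which I will call the Claim: the class of free cofibrations contains $\F(u)$ for every cofibration $u$ of $\Cat$ --- in particular it contains the generating cofibrations of $\PCat$ --- and it is closed under coproducts, cobase change, and transfinite composition. To prove the Claim I would use that $Ob\colon\PCat\to\mathbf{Mon}$ is a left adjoint: its right adjoint sends a monoid $N$ to the ``chaotic'' permutative category $E(N)$ whose monoid of objects is $N$ and which has a unique morphism between any ordered pair of objects --- all structure data being forced and all coherence diagrams commuting automatically, so that $E(N)$ is indeed strictly associative and strictly unital, hence permutative. Consequently $Ob$ preserves all colimits. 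Together with the facts that the free-monoid functor $\mathbf{Set}\to\mathbf{Mon}$ preserves coproducts, that a coproduct of free monoids is free, that the pushout in $\mathbf{Mon}$ of $Ob(A)\vee M\leftarrow Ob(A)\to Ob(A')$ is $Ob(A')\vee M$, and that a transfinite composite of coprojections $Ob(A)\to Ob(A)\vee M_0\to Ob(A)\vee M_0\vee M_1\to\cdots$ is the coprojection $Ob(A)\to Ob(A)\vee\bigvee_\lambda M_\lambda$ with $\bigvee_\lambda M_\lambda$ free, this yields all three closure properties; the assertion about $\F(u)$ uses that cofibrations of $\Cat$ are injective on objects, so that the object function of $\F(u)$, namely the free-monoid functor applied to the (injective) object function of $u$, is a coprojection into a coproduct with a free monoid.

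Granting the Claim, let $F\colon C\to D$ be a cofibration. The model category $\PCat$ is cofibrantly generated, being transferred from $\Cat$ along $\F\dashv U$, with set of generating cofibrations $\F(I)$ for $I$ a set of generating cofibrations of $\Cat$; so by the small object argument I would factor $F=p\circ j$ with $j\colon C\to E$ a relative $\F(I)$-cell complex and $p\colon E\to D$ an acyclic fibration. By the Claim, $j$ is a free cofibration. Since $F$ is a cofibration and $p$ an acyclic fibration, the square with upper edge $j$, vertical edges $F$ and $p$, and lower edge $\id_D$ has a diagonal filler $s\colon D\to E$ with $s\circ F=j$ and $p\circ s=\id_D$, and then the diagram
\[
\xymatrix{
C \ar@{=}[r] \ar[d]_F & C \ar@{=}[r] \ar[d]_j & C \ar[d]^F \\
D \ar[r]_s & E \ar[r]_p & D
}
\]
exhibits $F$ as a retract of the free cofibration $j$; since its top row consists of identities, the retract fixes $C$, as required.

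The main obstacle is the Claim, and within it the fact that $Ob\colon\PCat\to\mathbf{Mon}$ preserves colimits --- that is, the identification of the chaotic right adjoint $E(N)$ and the verification that it is a permutative category --- together with the elementary but careful bookkeeping showing that ``being a coprojection into a coproduct with a free monoid'' is stable under coproducts, pushouts and transfinite composites. Everything else is either formal (the retract argument and the small object argument, given that $\PCat$ is a cofibrantly generated model category) or a routine diagram chase (the lifting argument establishing the ``if'' direction).
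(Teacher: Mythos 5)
Your argument is correct, but the ``only if'' half takes a genuinely different route from the paper's. The ``if'' direction is essentially the same in both: free cofibrations lift against acyclic fibrations (the paper delegates this to the object-function characterisation of cofibrations from \cite{Sharma}; you build the lift by hand from surjectivity on objects and full faithfulness of the underlying functor), and cofibrations are closed under retracts. For the converse the paper avoids the small object argument entirely: given a cofibration $F:C\to D$ it constructs in one step a permutative category $E$ with object monoid $Ob(C)\vee\F_m(Ob(D))$ and hom-sets pulled back from $D$ along $p=Ob(F)\vee\epsilon$, so that $I:C\to E$ is visibly a free cofibration and $P:E\to D$ is fully faithful and surjective on objects, hence an acyclic fibration; the retract is then extracted from the same lifting square you use at the end. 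Your version instead takes the factorization supplied by the small object argument for $\F(I)$ and must therefore prove your Claim --- that free cofibrations contain $\F(I)$ and are closed under coproducts, cobase change and transfinite composition --- whose key input, colimit-preservation of $Ob:\PCat\to\mathbf{Mon}$ via the chaotic right adjoint, the paper never needs. Your route costs more machinery (cofibrant generation, smallness, the closure properties), but the Claim is of independent interest, since it shows every relative $\F(I)$-cell complex is a free cofibration; the paper's route is more economical and entirely explicit. One small point of care: the paper's ``free monoid'' is defined by a lifting (projectivity) property rather than by a chosen set of free generators, so in your lifting argument for the ``if'' direction you should invoke that lifting property directly on the summand $M$ instead of selecting preimages of generators --- the rest of your argument goes through unchanged.
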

\begin{proof}
	Let us first assume that $F$ is a retract of a free cofibration $i:E \to M$. We observe that the object function of a free cofibration has the left lifting property with respect to all surjective monoid homomorphisms, therefore each free cofibration is a cofibration in $\PCat$. A retract of a cofibration is again a cofibration. Thus, $F$ is a cofibration in $\PCat$.
	
	Conversely, let us assume that $F$ is a cofibration in $\PCat$.
	We have the following (outer) commutative diagram in the category of monoids
	\begin{equation*}
	\xymatrix{
	Ob(C) \ar[r]^{i \ \ \ \ \ \ \ } \ar[d]_{Ob(F)}  & Ob(C) \vee \F_m(Ob(D)) \ar[d]^p \\
	Ob(D) \ar@{=}[r] \ar@{-->}[ru]_L & Ob(D)
     }
	\end{equation*}
	where $\F_m(Ob(D))$ is the free monoid generated by the set $Ob(D)$, $i$ is the inclusion into the coproduct and $p = Ob(F) \vee \epsilon$, the summand $\epsilon:\F_m(Ob(D)) \to Ob(D)$ is the counit of the reflection:
	\[
	\F_m:\mathbf{Set} \rightleftarrows \mathbf{Mon}:U
	\] 
	Since the right vertical homomorphism of monoids is surjective and $F$ is a cofibration by assumption, therefore there exists a (dotted) lifting homomorphism $L$ which makes the whole diagram commutative. Thus $Ob(F)$ is a retract of the inclusion $i$ in the category of monoids. We will construct a strict symmetric monoidal functor $I:C \to E$ whose object function is the inclusion $i$ and show that $F$ is a retract of $I$. We begin by constructing the category $E$:
	
	The object set of $E$ is $Ob(C) \vee F(Ob(D))$.
	The morphism monoid of $E$ is defined by the following pullback square in the category of monoids:
	\begin{equation}
	\label{Mor-E}
	\xymatrix{
	Mor(E) \ar[r]^{p_1} \ar[d]_{p_2} & Mor(D) \ar[d]^{(s_D,t_D)} \\
	(Ob(C) \vee F(Ob(D))) \times (Ob(C) \vee F(Ob(D))) \ar[r]_{ \ \ \ \ \ \ \ \ \ \ \ \ \ \ \ \ \ \ p \times p} & Ob(D) \times Ob(D)
    }
	\end{equation}
	
	We will denote the projection map $p_2$ in the above cartesian square by $(s_E,t_E)$. This pair will be source and target maps for the proposed category $E$. The projection map $p_1$ in the above cartesian diagram restricts to a map between the set of composable arrows in $E$ and $D$: 
	\[
	p_1^c:Mor(E) \underset{s_E=t_E} \times Mor(E) \to Mor(D) \underset{s_D=t_D} \times Mor(D).
	\]
	Now we observe that the composite $(-\underset{D} \circ -) \circ p_1^c$ factors through $Mor(E)$ as follows:
	\begin{equation}
	\label{Comp-E}
	\xymatrix{
		Mor(E) \underset{s_E=t_E} \times Mor(E) \ar[r]^{{ \ \ \ \ \ \ \ \  -\underset{E} \circ -}} \ar[d]_{p_1^c} & Mor(E) \ar[d]^{p_1} \\
		Mor(D) \underset{s_D=t_D} \times Mor(D) \ar[r]_{ \ \ \ \ \ \ \ \  -\underset{D} \circ -} & Mor(D)
	}
	\end{equation}
	The map $-\underset{E} \circ -$ in the above commutative diagram provides the composition of the category $E$. Finally, we define the symmetry natural transformation of $E$ as follows:
	\begin{equation}
	\label{symmetry-E}
	\gamma^E_{z_1,z_2} := \gamma^D_{p(z_1), p(z_2)}
	\end{equation}
	for each pair of objects $z_1, z_2 \in Ob(E)$.
	This defined a permutative category $(E, - \underset{E} \boxtimes -, \gamma^E)$, where the tensor product is uniquely determined by the monoid structures on $Ob(E)$ and $Mor(E)$.
	
	The commutative diagrams \eqref{Mor-E}, \eqref{Comp-E} and the definition of the symmetry natural transformation \eqref{symmetry-E} together imply that there is a strict symmetric monoidal functor $P:E \to D$ whose object homomorphism is $p$ and morphism homomorphism is $p_1$. Further $P$ is surjective on objects and also fully-faithful. This implies that $P$ is an acyclic fibration in the natural model category $\PCat$.
	
	Now we construct the free cofibration $I:C \to E$ mentioned above. The object homomorphism of $I$ is the inclusion $i:Ob(C) \to Ob(C) \vee F(Ob(D))$. The morphism homomorphism of $I$ is defined as follows:
	\[
	Mor(I) := Mor(F).
	\]
	In other words, $I(f) = F(f)$ for each morphism $f \in Mor(C)$. Now we have the following (outer) commutative diagram in $\PCat$:
	\begin{equation*}
	\xymatrix{
		C \ar[r]^{I  } \ar[d]_{F}  & E \ar[d]^P \\
		D \ar@{=}[r] \ar@{-->}[ru]_L & D
	}
	\end{equation*}
	
	Since $F$ is a cofibration and $P$ is an acyclic fibration in the natural model category $\PCat$, therefore there exists a (dotted) lifting arrow $L$ which makes the entire diagram commutative. This implies that $F$ is a retract of the free cofibration $I$ in the natural model category $\PCat$.
\end{proof}

z
   \section[Left properness of $\PCat$]{Left properness of the natural model category $\PCat$}
\label{left-prop}

In this section we show that the natural model category of permutative categories $\PCat$ is left proper. We recall that a model category is left proper if the cobase change of a weak-equivalence along a cofibration is again a weak-equivalence. We will first show that the cobase change of a weak-equivalence along a free cofibration is a weak-equivalence. Using this intermediate result, we will prove the left properness of $\PCat$.

Let  $G:A \to B$ be an acyclic fibration in $\PCat$ and $i_A:A \to C$ be a free cofibration therefore the object monoid of $C$ can be written as a coproduct $Ob(A) \vee V$, where $V$ is a free monoid. We observe that the following commutative square is coCartesian:
\begin{equation*}
\xymatrix{
	Ob(A) \ar[r]^{Ob(i_A) \ \ \ \ } \ar[d]_{Ob(G)} & Ob(A) \vee V \ar[d]^{Ob(G) \vee id} \\
	Ob(B) \ar[r]_{i_B \ \ \ \ } & F(B) \vee V
}	
\end{equation*} 
We will construct the following pushout square in $\PCat$:
\begin{equation*}
\xymatrix{
	A \ar[r]^{i_A \ \ \ \ } \ar[d]_{G} & C \ar[d] \\
	B \ar[r] & B \underset{A}\sqcup C
}	
\end{equation*} 
%	We observe that the object function of the pushout of $G$ along $i_A$ is the monoid homomorphism $Ob(G) \vee id$. We will provide a construction of the pushout permutative category  $B \underset{A}\sqcup C$.

% We already know the object monoid of this permutative category.

A strict symmetric monoidal functor $G:A \to B$ is an acyclic fibration in $\PCat$ if and only if there exists a unital symmetric monoidal section \cite[Cor. 3.5(3)]{Sharma} $S:D \to C$ such that $GS = id_D$ and a monoidal natural isomorphism $\epsilon_S:SG \cong id$.
% Further, the monoidal natural isomorphism $\epsilon_S$ maybe so chosen that the composite in the following diagram is the identity natural transformation:
%\begin{equation}
%\label{str-on-codom-nat-iso}
%\xymatrix{
%	& &  & \\
%B \ar[r]_S & A \ar@/^3pc/[rr]^{id} \ar[r]_G & B \ar@{=>}[u]^{\epsilon_S } \ar[r]_S & A 
%}
%\end{equation}
%In other words,  $\epsilon_S(S(b)) = id_{S(b)}$ for all $b \in B$.
 Let us fix such a section $S:B \to A$ and natural isomorphism $\epsilon_S$.
 \begin{rem}
 	The above characterization of acyclic fibrations implies that $S:B \to A$ is a left-adjoint-right-inverse of $G:A \to B$. This means that $\epsilon_S:SG \cong id_A$ is a counit of an adjoint equivalence whose unit $\eta:GS = id_B$ is the identity natural transformation. This further implies that $G\epsilon_S \cdot \eta G = id_G$.
 	In other words, for each $a \in A$, we have the following equality:
 	\[
 	G(\epsilon_S(a)) \circ \eta(G(a)) = id_{G(a)}.
 	\]
 	Since the unit natural transformation $\eta$ is the identity, therefore $G\epsilon_S = G$.
 	\end{rem}
 \begin{rem}
 	\label{lambda-to-id}
 	Let $b_1, b_2$ be a pair of objects in $B$. Since $\epsilon_S$ is a monoidal natural transformation, therefore we have the following commutative diagram:
 	\begin{equation*}
 	\xymatrix@C=20mm{
 	SG(S(b_1) \otimes S(b_2)) \ar[r]^{\ \ \ \ \epsilon_S(S(b_1), S(b_2))} \ar[d]_{\lambda^S(b_1, b_2)} & S(b_1) \otimes S(b_2) \ar@{=}[d] \\
 	S(GS(b_1) \otimes GS(b_2)) \ar@{=}[r] & S(b_1) \otimes S(b_2)
    }
 	\end{equation*}
 	Thus we have shown that 
 	\[
 	\lambda^S = \epsilon_S S.
 	\]
 	This further implies that
 	\[
 	G\lambda^S = G\epsilon_S S = GS = id_B.
 	\]
 	\end{rem}
 
  The unital symmetric monoidal functor $S$ gives us the following unital symmetric monoidal functor:
\[
S \vee \F(C; V):B \vee \F(C; V) \to A \vee \F(C; V),
\]
where $\F(C; V)$ is the full permutative subcategory of $C$ whose object set is the (free) monoid $V$. We observe that $S \vee \F(C; V)$ is a section of the strict symmetric monoidal functor $G \vee \F(C; V)$ \emph{i.e.} $(G \vee \F(C; V)) \circ (S \vee \F(C; V)) = id$. Moreover, we get a monoidal natural isomorphism 
\[
\epsilon_S \vee \F(C;V):(S \vee \F(C; V)) \circ (G \vee \F(C; V)) \cong id
\]
Hence the functor $G \vee \F(C; V)$ is an acyclic fibration in the natural model category $\PCat$ by \cite[Cor. 3.5(3)]{Sharma}.

We observe that the free cofibration $i_A$ factors as follows:
\begin{equation}
\label{fact-inc}
\xymatrix{
A \ar[rr] \ar[rd]_{\iota_A} && C \\
& A \vee \F(C;V) \ar[ru]_{i_{A,V}}
 }
\end{equation}
where $\iota_A:A \to A \vee \F(C;V)$ is the inclusion into the coproduct and $i_{A,V}:A \vee \F(C;V) \to C$ is the unique map induced by the inclusions $i_A:A \to C$ and $i_V:\F(C,V) \to C$

\begin{rem}
	\label{co-Cart-1}
The following commutative square is a coCartesian:

\begin{equation}
\xymatrix{
	A \ar[r]^{\iota_A \ \ \ \ \ \ } \ar[d]_{G} & A \vee \F(C;V) \ar[d]^{G \vee \F(C;V)} \\
 B \ar[r]_{\iota_B \ \ \ \ \ \ }	&  B \vee \F(C;V)
}
\end{equation}
\end{rem}
We observe that the object monoid of $C$ is the same as the object monoid of $A \vee \F(C;V)$, namely the coproduct $(Ob(A)) \vee V$. This implies that for each $c \in Ob(C)$ there is the following isomorphism in $C$:
\[
(i_{A,V} \circ (\epsilon_S \vee \F(C; V)))(c):(S \vee \F(C; V)) \circ (G \vee \F(C; V))(c) \cong c,
\]
 
Now it follows from \cite[Prop. 2.7]{Sharma} that there exists a (uniquely defined) functor $S_C:C \to C$ and a natural isomorphism
$\delta^C:id_C \cong S_C$. The functor $S_C$ is defined on objects as follows:
\[
S_C(c) := (S \vee \F(C; V)) \circ (G \vee \F(C; V))(c). 
\]
 The following lemma now tells us that $S_C$ is a unital symmetric monoidal functor and $\delta^C$ is a monoidal natural isomorphism:
 \begin{lem}
 	\label{SM-Func-closed-isom}
 	Given a unital oplax symmetric monoidal functor $(F, \lambda_F)$ between two symmetric monoidal categories $C$ and $D$, a functor $G:C \to D$, and a unital natural isomorphism
 	$\alpha:F \cong G$, there is a unique natural isomorphism $\lambda_G$ which enhances $G$ to a unital oplax symmetric monoidal functor $(G, \lambda_G)$ such that $\alpha$ is a monoidal natural isomorphism. If $(F, \lambda_F)$ is unital symmetric monoidal then so is $(G, \lambda_G)$.
 \end{lem}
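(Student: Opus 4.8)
The plan is to transport the oplax structure of $(F,\lambda_F)$ along the isomorphism $\alpha$ by conjugation, and then to verify that the transported data satisfies the coherence axioms. Write $\lambda_F$ for the family of components $\lambda_F(c_1,c_2)\colon F(c_1\otimes c_2)\to F(c_1)\otimes F(c_2)$; unitality of $(F,\lambda_F)$ means that the unit comparison $F(\unit{C})\to\unit{D}$ is the identity, so $F(\unit{C})=\unit{D}$, and since $\alpha$ is a unital natural isomorphism we have $\alpha(\unit{C})=\id_{\unit{D}}$, hence also $G(\unit{C})=\unit{D}$. I would then define the candidate structure map by
\[
\lambda_G(c_1,c_2):=\bigl(\alpha(c_1)\otimes\alpha(c_2)\bigr)\circ\lambda_F(c_1,c_2)\circ\alpha(c_1\otimes c_2)^{-1},
\]
and take the unit comparison of $G$ to be the identity of $\unit{D}$.

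First I would observe that $\lambda_G$ is natural in the pair $(c_1,c_2)$: this is immediate from naturality of $\lambda_F$ together with naturality of $\alpha$ with respect to the tensor product functor. Next, the statement that $\alpha\colon(F,\lambda_F)\to(G,\lambda_G)$ is a monoidal natural transformation is precisely the assertion that $\bigl(\alpha(c_1)\otimes\alpha(c_2)\bigr)\circ\lambda_F(c_1,c_2)=\lambda_G(c_1,c_2)\circ\alpha(c_1\otimes c_2)$ for all $c_1,c_2$ (together with the trivial condition on units), and this holds by the very definition of $\lambda_G$. Conversely, since $\alpha(c_1\otimes c_2)$ is an isomorphism, this same equation determines $\lambda_G(c_1,c_2)$ uniquely, which gives the uniqueness clause.

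The substance of the proof is then checking the coherence axioms for $(G,\lambda_G)$: the oplax associativity axiom relating $\lambda_G$ evaluated at $(c_1\otimes c_2,c_3)$ and at $(c_1,c_2\otimes c_3)$ via the associators of $C$ and $D$; the two unit axioms; and compatibility of $\lambda_G$ with the symmetries $\gamma^C$ and $\gamma^D$. In each case I would substitute the definition of $\lambda_G$ and cancel the occurrences of $\alpha$ and $\alpha^{-1}$, using naturality of $\alpha$ against $\otimes$ and against the associators, unitors, and symmetries (all of which are natural isomorphisms); this rewrites the required diagram as the conjugate, by a natural isomorphism, of the corresponding coherence diagram for $(F,\lambda_F)$, which commutes by hypothesis. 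Since conjugation by a natural isomorphism carries commuting diagrams to commuting diagrams, each axiom follows.

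Finally, if $(F,\lambda_F)$ is in fact unital \emph{symmetric monoidal}, i.e.\ each $\lambda_F(c_1,c_2)$ is invertible, then $\lambda_G(c_1,c_2)$ is a composite of three isomorphisms and hence an isomorphism, so $(G,\lambda_G)$ is unital symmetric monoidal as well. I expect the main (though purely routine) obstacle to be the bookkeeping in the coherence diagrams; conceptually there is nothing to overcome, since the construction is just transport of structure along $\alpha$.
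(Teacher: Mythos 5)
Your proposal is correct and follows essentially the same route as the paper: you define $\lambda_G$ by the identical conjugation formula $(\alpha(c_1)\otimes\alpha(c_2))\circ\lambda_F(c_1,c_2)\circ\alpha(c_1\otimes c_2)^{-1}$, derive uniqueness from the monoidal-naturality equation for $\alpha$, and verify the coherence axioms by cancelling the $\alpha$'s against the corresponding diagrams for $(F,\lambda_F)$. The paper simply writes out the symmetry and associativity checks as explicit pasted diagrams and equality chains rather than invoking the general transport-of-structure principle, but the content is the same.
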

 \begin{proof}
 	We consider the following diagram:
 	\begin{equation*}
 	\label{unit-counit-SMM}
 	\xymatrix@C=2mm{
 		&& C \times C \ar@/^2pc/[dd]^{F \times F} \ar@/_2pc/[dd]_{G \times G}\ar[rrrrrrrr]^{\TensPFunc{C}} &&&&&&&&C \ar@/^2pc/[dd]^{G} \ar@/_2pc/[dd]_{F } &&  \\
 		&&& \ar@{=>}[ll]_{\ \alpha \times \alpha} &&&&&& \ar@{=>}[rr]^{\ \alpha} &&&& \\
 		&&D \times D \ar[rrrrrrrr]_{\TensPFunc{D}} &&&&&&&& D &&
 	}
 	\end{equation*}
 	This diagram helps us define a composite natural isomorphism $\lambda_{G}:G \circ (\TensPFunc{C}) \Rightarrow (\TensPFunc{D}) \circ G \times G$ as follows:
 	\begin{equation}
 	\label{def-OL-str-G}
 	\lambda_{G} := (id_{\TensPFunc{D}} \circ \alpha \times \alpha) \cdot \lambda_F \cdot (\inv{\alpha} \circ id_{\TensPFunc{C}}).
 	\end{equation}
 	This composite natural isomorphism is the unique natural isomorphism which makes $\alpha$ a unital monoidal natural isomorphism.
 	Now we have to check that $\lambda_G$ is a unital monoidal natural isomorphism with respect to the above definition. Clearly, $\lambda_G$ is unital because both $\alpha$ and $\lambda_F$ are unital natural isomorphisms. We first check
 	the symmetry condition \cite[Defn. 2.4 OL. 2]{Sharma}. This condition is satisfied because the following composite diagram commutes
 	\begin{equation*}
 	\label{symm-cond-alpha}
 	\xymatrix@C=12mm{
 		G(\TensP{c_1}{c_2}{C}) \ar[r]^{\inv{\alpha}(\TensP{c_1}{c_2}{C})} \ar[d]_{G(\gamma^C(c_1, c_2))} & F(\TensP{c_1}{c_2}{C}) \ar[d]^{F(\gamma^C(c_1, c_2))} \ar[r]^{\lambda_F(c_1, c_2) \ \ \ }  &\TensP{F(c_1)}{F(c_2)}{D} \ar[d]^{\gamma^D(F(c_1), F(c_2))} \ar[r]^{\TensP{\alpha(c_1)}{\alpha(c_2)}{D}} & \TensP{G(c_1)}{G(c_2)}{D} \ar[d]^{\gamma^D(G(c_1), G(c_2))}  \\
 		G(\TensP{c_2}{c_1}{C}) \ar[r]_{\inv{\alpha}(\TensP{c_2}{c_1}{C})} & F(\TensP{c_2}{c_1}{C}) \ar[r]_{\lambda_F(c_2, c_1) \ \ \ }  &\TensP{F(c_2)}{F(c_1)}{D} \ar[r]_{\TensP{\alpha(c_2)}{\alpha(c_1)}{D}} & \TensP{G(c_2)}{G(c_1)}{D}
 	}
 	\end{equation*}
 	The condition \cite[Defn. 2.4 OL. 3]{Sharma} follows from the following equalities
 	\begin{multline*}
 	\alpha_D(G(c_1), G(c_2), G(c_3)) \circ  \TensP{\lambda_G(c_1, c_2)}{id_{G(c_3)}}{D} \circ \lambda_G(\TensP{c_1}{c_2}{C}, c_3) = \\
 	\TensP{(\TensP{\alpha(c_1)}{\alpha(c_2)}{D})}{\alpha(c_3)}{D}
 	\circ
 	\alpha_D(F(c_1), F(c_2), F(c_3)) \circ 
 	\TensP{\lambda_F(c_1, c_2)}{id_{F(c_3)}}{D} \circ \\\lambda_F(\TensP{c_1}{c_2}{C}, c_3) \circ
 	\inv{\alpha}(\TensP{(\TensP{c_1}{c_2}{C})}{c_3}{C}) = \\
 	\TensP{(\TensP{\alpha(c_1)}{\alpha(c_2)}{D})}{\alpha(c_3)}{D}
 	\circ \TensP{id_{F(c_1)}}{\lambda_F(c_1, c_2)}{D} \circ \lambda_F(c_1, \TensP{c_2}{c_3}{C}) \circ F(\alpha_C(c_1, c_2, c_3)) \\
 	\circ  \inv{\alpha}(\TensP{(\TensP{c_1}{c_2}{C})}{c_3}{C}) = \\
 	\TensP{id_{G(c_1)}}{\lambda_G(c_1, c_2)}{D} \circ \lambda_G(c_1, \TensP{c_2}{c_3}{C}) \circ G(\alpha_C(c_1, c_2, c_3)).
 	\end{multline*}
 	
 	If $F= (F, \lambda_F)$ is a symmetric monoidal functor then so is $G= (G, \lambda_G)$ because \eqref{def-OL-str-G} is a natural isomorphism.
 	
 \end{proof}

The section $S \vee \F(C;V)$ provides us with a unital symmetric monoidal functor $i_{A,V} \circ (S \vee \F(C; V)):B \vee \F(C; V) \to C$ which we denote by $S^\F$. The unital symmetric monoidal functor $S^\F$ has the following Gabriel factorization:
\begin{equation*}
\xymatrix{
	B \vee \F(C; V) \ar[rr]^{S^\F } \ar[rd]_{\Gamma_{S^\F }} && C  \\
	&  G(S^\F) \ar[ru]_\Delta
}	
\end{equation*}
By lemma \ref{Gab-fact-SM} $(G(S^\F), -\boxdot-, \gamma)$ is a permutative category structure.
Also, by the same lemma, $\Gamma$ is a strict symmetric monoidal functor.

%\begin{rem}
%	\label{str-on-cod-inc}
%	By the choice of our natural transformation $\epsilon_S$, see \eqref{str-on-codom-nat-iso}, it follows that
%	\[
%	\delta^C(S^\F(z)) = S^\F(z),
%	\]
%	for all $z \in B \vee \F(C;V)$.
%\end{rem}

\begin{rem}
	\label{SC-SF-comp}
	The following diagram of unital symmetric monoidal functors is commutative:
	\begin{equation*}
	\xymatrix{
		B \vee \F(C;V) \ar[d]_{S \vee \F(C;V)} \ar[r]^{  \ \ \ \ S^\F} & C \\
		A \vee \F(C;V)  \ar[r]_{ \ \ \ \ i_{A,V}} & C \ar[u]_{S_C}
	}
	\end{equation*}
	The above commutative diagram implies that for each object $z \in G(S^\F)$, $\lambda^{S^\F}(z) = \lambda^{S_C}(z)$.
\end{rem}
 We claim that there exists a strict symmetric monoidal functor $P:C \to G(S^\F)$ such that the following diagram, in $\PCat$, is coCartesian:
\begin{equation}
\label{coCart-Sq}
\xymatrix{
	A \ar[d]_G \ar[r]^{i_A} & C \ar[d]^P \\
	B \ar[r]_{\Gamma \ \ \ \ } & G(S^\F) 
}
\end{equation}
where $\Gamma = \Gamma_{S^\F } \circ \iota_B$.
The object function of the functor $P$ is the monoid homomorphism 
\[
Ob(G) \vee V:Ob(A) \vee V \to Ob(B) \vee V.
\]
For any pair of objects $c_1, c_2 \in Ob(C)$, we observe the following equality:
\[
G(S^\F)(P(c_1), P(c_2)) = C(S_C(c_1), S_C(c_2)).
\]
 Now we define the morphism function of $P$ as follows:
 \[
 P(f) := S_C(f),
 \]
 where $f$ is a morphism in $C$. The functoriality of $P$ follows from that of $S_C$.
%The natural isomorphism $\delta = (i_A \vee i_{\F(C;V)}) \circ (\epsilon_S \vee \F(C;V))$ gives us a pair of isomorphisms $\delta_{c_1}:S^\F(P(c_1)) \cong c_1$ and $\delta_{c_2}:S^\F(P(c_2)) \cong c_2$, where $(i_A \vee i_{\F(C;V)}):A \vee \F(C;V) \to C$ is the functor induced by $i_A$ and the inclusion of $\F(C;V)$ into $C$. We define the morphism function of $P$ as the following isomorphism induced by the two aforementioned (natural) isomorphisms:
%\[
%P_{c_1, c_2}:C(c_1, c_2) \cong C(S^\F P(c_1), S^\F P(c_2)) = G(S^\F)(P(c_1), P(c_2)).
%\]

The object function of $P$ is a monoid homomorphism therefore $P(c_1 \underset{C} \otimes c_2) = P(c_1) \boxdot P(c_2)$, for each pair of objects $c_1, c_2 \in Ob(C)$. The following commutative diagram shows that $P(f_1 \underset{C} \otimes f_2) = P(f_1) \boxdot P(f_2)$, for each pair of maps $(f_1, f_2) \in C(c_1, c_2) \times C(c_3, c_4)$:
\begin{equation*}
\xymatrix@C=18mm{
	c_1 \underset{C} \otimes c_2 \ar[r]^{f_1 \underset{C} \otimes f_2} \ar[d]_\delta^\cong  & c_3 \underset{C} \otimes  c_4 \ar[d]^\delta_\cong \\
	S_C(c_1 \underset{C} \otimes  c_2) \ar[r]_{P(f_1 \underset{C} \otimes f_2)} \ar@{=}[d] & S_C(c_3 \underset{C} \otimes  c_4) \ar@{=}[d] \\
	S^\F (P(c_1) \boxdot  P(c_2)) \ar[r]_{P(f_1) \boxdot P(f_2)} \ar[d]_{\lambda_{S^\F}} & S^\F (P(c_3)  \boxdot  P(c_4)) \ar[d]^{\lambda_{S^\F}} \\
	S^\F (P(c_1)) \underset{C} \otimes  S^\F (P(c_2)) \ar[r]_{P(f_1) \underset{C} \otimes P(f_2)}  & S^\F (P(c_3)) \underset{C} \otimes  S^\F (P(c_4))
}
\end{equation*}
Thus, we have defined a strict symmetric monoidal functor $P$ which is fully faithful. Further, each object of $G(S^\F)$ is isomorphic to one in the image of $P$. Thus,discussion $P$ is an equivalence of categories.

\begin{prop}
	The commutative square \eqref{coCart-Sq} is coCartesian.
\end{prop}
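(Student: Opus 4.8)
The plan is to verify directly that $(G(S^\F),P,\Gamma)$ has the universal property of the pushout $B\sqcup_A C$ in $\PCat$; commutativity of~\eqref{coCart-Sq} having already been arranged, this is all that is needed. So I would fix a permutative category $X$ together with strict symmetric monoidal functors $u\colon C\to X$ and $v\colon B\to X$ satisfying $u\circ i_A=v\circ G$, and construct a \emph{unique} strict symmetric monoidal $w\colon G(S^\F)\to X$ with $w\circ P=u$ and $w\circ\Gamma=v$.

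The first step is to route everything through the coCartesian square of Remark~\ref{co-Cart-1}. Since $(u\circ i_{A,V})\circ\iota_A=u\circ i_A=v\circ G$ by the factorization~\eqref{fact-inc}, the universal property of that square yields a unique strict symmetric monoidal $R\colon B\vee\F(C;V)\to X$ with $R\circ(G\vee\F(C;V))=u\circ i_{A,V}$ and $R\circ\iota_B=v$. I will lean on two facts, each the $\F$-indexed analogue of an identity recorded above (respectively $G\lambda^S=\mathrm{id}$ of Remark~\ref{lambda-to-id} and $G\epsilon_S=G$ of the remark preceding it), proved the same way from the fact that $S\vee\F(C;V)$ is a left-adjoint-right-inverse of $G\vee\F(C;V)$ with identity unit: namely $(G\vee\F(C;V))(\lambda^{S\vee\F(C;V)})=\mathrm{id}$, and $(G\vee\F(C;V))$ carries every component of $\epsilon_S\vee\F(C;V)$ to an identity. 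The first makes $(G\vee\F(C;V))\circ(S\vee\F(C;V))$ the identity \emph{symmetric monoidal} functor, whence $u\circ S^\F=(u\circ i_{A,V})\circ(S\vee\F(C;V))=R\circ(G\vee\F(C;V))\circ(S\vee\F(C;V))=R$; in particular $Ob(u)\circ Ob(S^\F)=Ob(R)$ and $u(\lambda^{S^\F})=\mathrm{id}$. The second, together with $u\circ i_{A,V}=R\circ(G\vee\F(C;V))$, gives $u(\delta^C(c))=R(\mathrm{id})=\mathrm{id}$ for every object $c$ of $C$, since $\delta^C(c)$ is $\bigl(i_{A,V}\circ(\epsilon_S\vee\F(C;V))\bigr)(c)$ up to inverse.

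Now I would define $w$ by $w:=Ob(R)$ on objects and, on a morphism $\phi\in G(S^\F)(z_1,z_2)=C(S^\F z_1,S^\F z_2)$, by $w(\phi):=u(\phi)$; this lands in $X(w(z_1),w(z_2))$ because $u(S^\F z_i)=Ob(R)(z_i)$. Since composition and identities in $G(S^\F)$ are those of $C$, $w$ is a functor, and it is strict symmetric monoidal: $Ob(R)$ is a monoid homomorphism, and by the description of the permutative structure on $G(S^\F)$ in Lemma~\ref{Gab-fact-SM} one has $\phi_1\boxdot\phi_2=\lambda^{S^\F}(z_1',z_2')^{-1}\circ(\phi_1\otimes_C\phi_2)\circ\lambda^{S^\F}(z_1,z_2)$ and $\gamma^{G(S^\F)}(z_1,z_2)=\Gamma_{S^\F}(\gamma^{B\vee\F(C;V)}(z_1,z_2))$, so applying $w$ and using that $u$ is strict together with $u(\lambda^{S^\F})=\mathrm{id}$ turns these into $w(\phi_1)\otimes_X w(\phi_2)$ and $\gamma^X(w(z_1),w(z_2))$.

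Finally I would check the two triangles and uniqueness. Because $\Gamma_{S^\F}$ is the identity on objects and $w(\Gamma_{S^\F}(h))=u(S^\F(h))=R(h)$, we get $w\circ\Gamma_{S^\F}=R$, hence $w\circ\Gamma=w\circ\Gamma_{S^\F}\circ\iota_B=R\circ\iota_B=v$. For $w\circ P=u$: on objects $Ob(w)\circ Ob(P)=Ob(R)\circ(Ob(G)\vee\mathrm{id})=Ob(u)$, and on a morphism $f\colon c_1\to c_2$ of $C$, $w(P(f))=w(S_C(f))=u(S_C(f))=u\bigl(\delta^C(c_2)\circ f\circ\delta^C(c_1)^{-1}\bigr)=u(f)$ by the vanishing $u(\delta^C(c))=\mathrm{id}$. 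For uniqueness, $Ob(G)$ is split epi (section $Ob(S)$), so $P$ is surjective on objects, and $P$ is full; hence $P$ is surjective on all morphisms, and any strict symmetric monoidal $w'$ with $w'\circ P=u$ must coincide with $w$. The step I expect to be the real work is establishing the two ``analogue'' identities of the second paragraph and, through them, seeing that the naive assignments $Ob(w)=Ob(R)$, $w(\phi)=u(\phi)$ genuinely assemble into a \emph{strict} symmetric monoidal functor that is simultaneously compatible with $P$ (whose morphism part is twisted by the isomorphisms $\delta^C$) and with $\Gamma$.
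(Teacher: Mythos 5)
Your proof is correct and follows essentially the same route as the paper: reduce via the factorization \eqref{fact-inc} and Remark \ref{co-Cart-1}, define the induced functor on objects through the identification $Ob(G(S^\F))=Ob(B\vee\F(C;V))$ and on morphisms through $G(S^\F)(z_1,z_2)=C(S^\F z_1,S^\F z_2)$, and use $u(\lambda^{S^\F})=\mathrm{id}$ (the paper's $R\lambda^{S^\F}=\mathrm{id}$, derived from Remark \ref{lambda-to-id}) to obtain strict monoidality. You additionally supply two details the paper leaves implicit --- the verification $w\circ P=u$ via $u(\delta^C)=\mathrm{id}$, and uniqueness via surjectivity on objects and fullness of $P$ --- which are welcome but do not change the argument.
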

\begin{proof}
	In order to show that \eqref{coCart-Sq} is coCartesian, it is sufficient to show that the following commutative square is coCartesian, in light of factorization \eqref{fact-inc} and remark \ref{co-Cart-1}:
	\begin{equation*}
	\xymatrix{
		A \vee \F(C;V) \ar[d]_{G \vee \F(C;V)} \ar[r]^{ \ \ \ i_{A, V}} & C \ar[d]^P  \\
		B \vee \F(C;V) \ar[r]_{\ \ \ \Gamma_{S^\F}  }   & G(S^\F) 
	}
	\end{equation*}
	We will show that whenever we have the following (outer) commutative diagram, there exists a unique dotted arrow $L$ which makes the whole diagram commutative in $\PCat$:
	\begin{equation*}
	\xymatrix{
		A \vee \F(C;V) \ar[d]_{G \vee \F(C;V)} \ar[r]^{ \ \ \ \ i_{A, V}} & C \ar[d]^P \ar@/^1pc/[rdd]^R \\
		B \vee \F(C;V) \ar[r]_{ \ \ \ \ \Gamma_{S^\F}} \ar@/_1pc/[rrd]_T  & G(S^\F) \ar@{-->}[rd]^L \\
		&& X
	}
	\end{equation*}
	Since $Ob(\Gamma_{S^\F})$ is the identity, therefore the object homomorphism $Ob(L)$ has to be the same as $Ob(T)$ in order to make the diagram commutative, therefore we define $Ob(L) = Ob(T)$.
	The morphism function of $L$ is defined as follows:
	\[
	L_{z_1, z_2} := R_{S^\F(z_1), S^\F(z_2)}:G(S^\F)(z_1, z_2) = C(S^\F(z_1), S^\F(z_2)) \to X(L(z_1), L(z_2)).
	\]
	for each pair of objects $z_1, z_2 \in Ob(G(S^\F))$. This defines a functor $L$ which makes the diagram above commutative (in $\Cat$).
	In order to verify that $L$ is a strict symmetric monoidal functor, it is sufficient to show that for each pair of maps $f_1:z_1 \to z_2$, $f_2:z_3 \to z_4$ in $G(S^\F)$, 
	\begin{equation}
	\label{str-mon-L}
	L(f_1 \boxdot f_2) = L(f_1) \underset{X} \otimes L(f_2) = R(f_1) \underset{X} \otimes R(f_2).
	\end{equation}
	We recall that the map $f_1 \boxdot f_2$ is defined by the following commutative diagram:
	\[
	\xymatrix{
	S^\F(z_1) \underset{C} \otimes S^\F(z_3) \ar[r]^{f_1 \underset{C} \otimes f_2}  &  S^\F(z_2) \underset{C} \otimes S^\F(z_4)  \\
	S^\F(z_1 \underset{B} \otimes z_2) \ar[r]_{f_1 \boxdot f_2} \ar[u]^{\lambda^{S^\F}} & S^\F(z_3 \underset{B} \otimes z_4) \ar[u]_{\lambda^{S^\F}}
     }
	\]
	Since $R$ is a strict symmetric monoidal functor, therefore $R(f_1 \underset{C} \otimes f_2) = R(f_1) \underset{X} \otimes R(f_2)$. Now it sufficient to show that $R\lambda^{S^\F} = id$, in order to establish the equalities in \eqref{str-mon-L}.
	We observe that $\lambda^{S^\F} = i_{A, V}(\lambda^{S \vee \F(C;V)})$. Since $G\vee \F(C, V)$ is an acyclic fibration, it follows from remark \ref{lambda-to-id} that $G\vee \F(C, V) \lambda^{S \vee \F(C;V)} = id$. Since $T \circ G\vee \F(C, V) = R \circ i_{A, V}$, it follows that $R(\lambda^{S^\F}) = id$.
%	Let $f_1:S^\F(z_1) \to S^\F(z_2)$ and $f_2:S^\F(z_3) \to S^\F(z_4)$ be two maps in $G(S^\F)$. We want to show that $L(f_1 \boxdot f_2) = L(f_1) \underset{X} \otimes L(f_2)$. We observe the following commutative diagram in $C$:
%	\[
%	\xymatrix{
%	S^F(z_1) \underset{C} \otimes S^F(z_3) \ar[r] \ar[d] & S^F(z_2) \underset{C} \otimes S^F(z_4) \ar[d] \\
%	S_C(S^F(z_1) \underset{C} \otimes S^F(z_3)) \ar[r] & S_C(S^F(z_3) \underset{C} \otimes S^F(z_4))
%    }
%	\]
%	
%	By remark \ref{} $L$ is a strict symmetric monoidal functor.
%	
%	\dots
%	
	The uniqueness of the object functor of $L$ is obvious. The uniqueness of the morphism homomorphism of $L$ can be easily checked.
	
\end{proof}

The main objective of this section is to show that the natural model category $\PCat$ is left proper. The next lemma serves as a first step in proving the main result. The lemma follows from the above discussion:
\begin{lem}
	In the natural model category $\PCat$ a pushout  of a weak-equivalence along a free cofibration is a weak-equivalence.
\end{lem}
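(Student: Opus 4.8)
The plan is to reduce the general statement to the special case already settled in the discussion preceding the lemma, namely the case in which the weak-equivalence is an acyclic fibration. For such a $G:A \to B$ and a free cofibration $i_A:A \to C$, the coCartesian square \eqref{coCart-Sq} exhibits the pushout of $G$ along $i_A$ as the functor $P:C \to G(S^\F)$, and $P$ was shown there to be an equivalence of categories, hence a weak-equivalence. So it remains only to drop the fibrancy assumption, which I would do by a standard factorization argument combined with the stability of free cofibrations under cobase change.

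First I would factor an arbitrary weak-equivalence $G:A \to B$ of $\PCat$ as $G = Q \circ J$ with $J:A \to A'$ an acyclic cofibration and $Q:A' \to B$ a fibration; since $G$ and $J$ are weak-equivalences, the two-out-of-three property makes $Q$ an acyclic fibration. Given a free cofibration $i_A:A \to C$, form the pushout square
\begin{equation*}
\xymatrix{
A \ar[r]^{i_A} \ar[d]_{J} & C \ar[d]^{j} \\
A' \ar[r]_{i_{A'}} & C'
}
\end{equation*}
in $\PCat$, so that $C' = A' \underset{A}{\sqcup} C$. The map $j$, being the cobase change of the acyclic cofibration $J$, is itself an acyclic cofibration and in particular a weak-equivalence. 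The crucial observation is that $i_{A'}:A' \to C'$ is again a \emph{free} cofibration: the functor $Ob:\PCat \to \mathbf{Mon}$ preserves colimits (it has a right adjoint, sending a monoid $M$ to the chaotic permutative category on $M$), so from $Ob(C) = Ob(A) \vee V$ with $V$ free we obtain $Ob(C') \cong Ob(A') \underset{Ob(A)}{\sqcup} (Ob(A) \vee V) \cong Ob(A') \vee V$ by pasting of coproducts in $\mathbf{Mon}$, with $Ob(i_{A'})$ the coproduct inclusion---exactly the monoid computation already recorded in the coCartesian square of monoids at the beginning of this section.

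Now apply the acyclic-fibration case (the preceding discussion) to the acyclic fibration $Q:A' \to B$ and the free cofibration $i_{A'}:A' \to C'$: the pushout of $Q$ along $i_{A'}$, that is the canonical map $C' \to B \underset{A'}{\sqcup} C'$, is a weak-equivalence. By the pasting law for pushouts one has $B \underset{A'}{\sqcup} C' \cong B \underset{A}{\sqcup} C$, and under this identification the pushout of $G$ along $i_A$ is the composite $C \to C' \to B \underset{A'}{\sqcup} C'$ (whose first factor is $j$) of two weak-equivalences, hence a weak-equivalence. The only genuinely non-formal ingredient is the stability of free cofibrations under cobase change, and I expect that to be the main---though mild---obstacle, as it rests solely on $Ob$ preserving colimits together with the elementary computation of amalgamated free products of monoids already used at the start of the section.
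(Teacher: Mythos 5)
Your proof is correct and follows essentially the same route as the paper's: factor the weak-equivalence as an acyclic cofibration followed by an acyclic fibration, use that acyclic cofibrations are stable under cobase change, and reduce to the acyclic-fibration case settled by the coCartesian square \eqref{coCart-Sq} in the preceding discussion. The one place you go beyond the paper is in explicitly verifying that the cobase change $i_{A'}:A' \to C'$ of the free cofibration is again \emph{free} (via $Ob:\PCat \to \mathbf{Mon}$ preserving colimits and the pasting of coproducts of monoids) --- a step the paper's two-line proof needs but leaves implicit, so this is a welcome addition rather than a divergence.
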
 
\begin{proof}
	In light of the facts that each weak equivalence in a model category can be factored as an acyclic cofibration followed by an acyclic fibration and acyclic cofibrations are closed under cobase change, it is sufficient to see that the cobase change of an acyclic fibration is a weak-equivalence. This follows from the dicussion above.
	
\end{proof}

Now we state and prove the main result of this note:
\begin{thm}
	\label{left-Prop-Perm}
	The natural model category of permutative categories $\PCat$ is a left proper model category.
\end{thm}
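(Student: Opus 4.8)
The plan is to reduce the statement to the preceding lemma, which already handles free cofibrations, by a purely formal retract argument. Recall that $\PCat$ is left proper exactly when, for every pushout square
\[
\xymatrix{
A \ar[r]^{i} \ar[d]_{w} & C \ar[d]^{\bar\imath} \\
B \ar[r]_{g} & P
}
\]
in which $w$ is a weak equivalence and $i$ is a cofibration, the functor $\bar\imath$ is again a weak equivalence; such pushouts exist because $\PCat$, being a model category, is cocomplete. By Proposition \ref{char-cof} I would pick a free cofibration $j\colon A \to C'$ together with strict symmetric monoidal functors $s\colon C \to C'$ and $r\colon C' \to C$ satisfying $rs = id_C$, $si = j$ and $rj = i$ (the retraction fixing $A$).

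First I would form, alongside $P = B \underset{A}{\sqcup} C$, the pushout $P' = B \underset{A}{\sqcup} C'$ of $w$ along the free cofibration $j$, with structure maps $f\colon B \to P'$ and $\bar\jmath\colon C' \to P'$. By the preceding lemma $\bar\jmath$ is a weak equivalence, since $w$ is a weak equivalence and $j$ is a free cofibration.

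Next I would invoke the functoriality of pushouts. The cocone $\bigl(f\colon B \to P',\ \bar\jmath s\colon C \to P'\bigr)$ on the span $B \xleftarrow{w} A \xrightarrow{i} C$ is well defined because $\bar\jmath s i = \bar\jmath j = f w$, so it induces a map $\bar s\colon P \to P'$ with $\bar s\,\bar\imath = \bar\jmath s$ and $\bar s g = f$; symmetrically $r$ induces $\bar r\colon P' \to P$ with $\bar r\,\bar\jmath = \bar\imath r$ and $\bar r f = g$. Since $\bar r\,\bar s$ agrees with $id_P$ after precomposition with both $\bar\imath$ and $g$, the uniqueness part of the universal property of $P$ forces $\bar r\,\bar s = id_P$. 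Hence
\[
\xymatrix{
C \ar[r]^{s} \ar[d]_{\bar\imath} & C' \ar[r]^{r} \ar[d]^{\bar\jmath} & C \ar[d]^{\bar\imath} \\
P \ar[r]_{\bar s} & P' \ar[r]_{\bar r} & P
}
\]
exhibits $\bar\imath$ as a retract of $\bar\jmath$ in the category of arrows of $\PCat$. Weak equivalences in any model category are closed under retracts, so $\bar\imath$ is a weak equivalence, which is precisely left properness.

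The argument is essentially formal once the preceding lemma is in hand; the only point demanding a little care is the bookkeeping in the third step, namely checking that the induced maps $\bar s$ and $\bar r$ are simultaneously compatible with the retraction data on $C$ and with the common leg out of $B$, so that they genuinely assemble into a retract diagram of arrows. I anticipate no substantive obstacle beyond this.
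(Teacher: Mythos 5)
Your proof is correct and follows essentially the same route as the paper: replace the cofibration by a free cofibration of which it is a retract relative to $A$ (Proposition \ref{char-cof}), push out along both, and exhibit the cobase change of the weak equivalence as a retract of the one along the free cofibration, which the preceding lemma handles. Your write-up is in fact more careful than the paper's about the induced maps between the two pushouts and the appeal to retract-closure of weak equivalences.
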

\begin{proof}
	We will show that a pushout $P(F;q)$ of a weak equivalence $F:A \to D$ in $\PCat$ along a cofibration $q:A \to B$ in $\PCat$ is a weak-equivalence. We consider the following commutative diagram:
	\begin{equation*}
	\xymatrix{
		& & C \ar[ddd]^{P(F;l)}  \ar[rd]\\
		A \ar[r]^q \ar[d]_F \ar@/^1pc/[rru]^r & B \ar[ru]^l \ar[d]^{P(F;q)} \ar@{=}[rr] &     & B \ar[ddd]^{P(F;q)} \\
		D \ar[r] \ar@/_1pc/[rrd] \ar@/_2pc/[rrrdd] & P \ar[rd] \\
		&& P_s \ar[rd] \\
		&&& P
	}
	\end{equation*}
	Since $F$ is a cofibration therefore by proposition \ref{char-cof} there exists a free cofibration $r:A \to C$ such that $F$ is a retract of $r$ by a map that fixes $A$.
	The top left commutative square in the above diagram is coCartesian. The map $P(F;l)$ is a pushout of $F$ along the free cofibration $r$ and therefore a weak-equivalence by lemma \ref{left-Prop-Perm}. Now the result follows from the observation that the diagonal composite $P \to P_s \to P$, in the above diagram, is the identity map and the commutativity of the above diagram.
	
\end{proof}

\appendix
 \section{Gabriel Factorization of symmetric monoidal functors}
\label{Gab-Fact-Perm}

In this appendix we construct a \emph{Gabriel Factorization} of a unital symmetric monoidal functor beyween permutative categories. Our construction factors a unital symmetric monoidal functor into an essentially surjective strict symmetric monoidal functor followed by a fully-faithful unital symmetric monoidal functor.
\begin{lem}
	\label{Gab-fact-SM}
	Each unital symmetric monoidal functor $F:C \to D$ between permutative categories can be factored as follows:
	\begin{equation*}
	\xymatrix{
		C \ar[rr]^{F \ \ \ \ } \ar[rd]_{\Gamma_F} && D  \\
		&  G(F) \ar[ru]_{\Delta_F}
	}	
	\end{equation*} 
	where $\Gamma_F$ is a strict symmetric monoidal functor which is identity on objects and $\Delta$ is fully-faithful.
\end{lem}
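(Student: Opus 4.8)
The plan is to take $G(F)$ to be the ordinary full-image factorization of $F$ in $\Cat$ and then to transport the permutative structure of $D$ onto it along the object function of $F$, in the same spirit as the construction of the category $E$ in the proof of Proposition \ref{char-cof}. Concretely, set $Ob(G(F)) := Ob(C)$ and $G(F)(c_1,c_2) := D(F(c_1),F(c_2))$, with composition and identities inherited from $D$; this is a category. Let $\Gamma_F$ be the identity on objects and $f \mapsto F(f)$ on morphisms, and let $\Delta_F$ be $Ob(F)$ on objects and the identity on hom-sets, under the identification above. Then $\Delta_F \circ \Gamma_F = F$, the functor $\Gamma_F$ is the identity on objects, and $\Delta_F$ is fully faithful by construction.

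I would then install the symmetric monoidal structure on $G(F)$. Since $Ob(G(F)) = Ob(C)$ is already a monoid, I use it as the monoid of objects, with unit $1_C$, so that associativity and unitality are strict on objects. On morphisms, for $f \colon F c_1 \to F c_2$ and $g \colon F c_3 \to F c_4$ in $D$ I set
\[
f \boxdot g \;:=\; \lambda_F(c_2,c_4)^{-1} \circ (f \otimes_D g) \circ \lambda_F(c_1,c_3) \;\in\; D\bigl(F(c_1 \otimes_C c_3),\, F(c_2 \otimes_C c_4)\bigr),
\]
and I define the symmetry by $\gamma^{G(F)}_{c_1,c_2} := \lambda_F(c_2,c_1)^{-1} \circ \gamma^D_{F c_1, F c_2} \circ \lambda_F(c_1,c_2)$; here $\lambda_F$ is invertible because $F$ is symmetric monoidal. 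One checks that $(G(F), \boxdot, 1_C, \gamma^{G(F)})$ is a symmetric monoidal category: bifunctoriality of $\boxdot$ and the interchange law follow from those of $\otimes_D$ together with naturality of $\lambda_F$; the hexagon and naturality of $\gamma^{G(F)}$ follow from the corresponding properties of $\gamma^D$ and the symmetry axiom \cite[Defn. 2.4 OL. 2]{Sharma} for $F$; and, crucially, the associativity and unit coherence axioms \cite[Defn. 2.4 OL. 3]{Sharma} for $\lambda_F$, together with the strictness of $\otimes_C$, force the transported associator and unitors to be identities, so that $G(F)$ is in fact \emph{permutative}. This part is a routine diagram chase running in parallel with Lemma \ref{SM-Func-closed-isom}.

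It remains to identify the two legs. The structure constraint of $\Gamma_F$ is the identity, and strict symmetric monoidality of $\Gamma_F$ reduces to the identities $F(f \otimes_C g) = \Gamma_F(f) \boxdot \Gamma_F(g)$ and $F(\gamma^C_{c_1,c_2}) = \gamma^{G(F)}_{c_1,c_2}$, which are precisely naturality of $\lambda_F$ and the symmetry axiom for $F$ (with $F(1_C) = 1_D$ handling units); thus $\Gamma_F$ is a strict symmetric monoidal functor which is the identity on objects. The functor $\Delta_F$ is fully faithful, and it becomes a unital symmetric monoidal functor with structure isomorphism $\lambda_{\Delta_F}(c_1,c_2) := \lambda_F(c_1,c_2)$, the coherence and symmetry axioms for $(\Delta_F, \lambda_{\Delta_F})$ being exactly those assumed for $(F,\lambda_F)$ once the definitions of $\boxdot$ and $\gamma^{G(F)}$ are unwound; moreover $\Delta_F \circ \Gamma_F = F$ with matching structure constraints, which completes the factorization. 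I expect the main obstacle to be the middle step: checking that the structure transported from $D$ is not only symmetric monoidal but \emph{strictly} associative and unital — that is, that the coherence data of the monoidal functor $F$ exactly cancels the (already trivial) associators and unitors of the strict category $C$. This is the only place where a genuine, if routine, verification is needed, and it is where every axiom of a unital symmetric monoidal functor gets used.
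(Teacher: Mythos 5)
Your construction is essentially identical to the paper's: the same full-image category $G(F)$ with $Ob(G(F))=Ob(C)$ and homs $D(F(c_1),F(c_2))$, the same $\boxdot$ on morphisms obtained by conjugating $\otimes_D$ with $\lambda_F$, and the same conclusion that strict associativity and unitality come for free from the monoid $Ob(C)$ and the strictness of $\otimes_D$. The only cosmetic difference is that you define the symmetry by transporting $\gamma^D$ along $\lambda_F$ while the paper sets $\gamma_{c_1,c_2}:=F(\gamma^C_{c_1,c_2})$; these coincide by the symmetry axiom for $(F,\lambda_F)$, so the proof is the same.
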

\begin{proof}
	We begin by defining the permutative category $G(F)$. The object monoid of $G(F)$ is the same as $Ob(C)$. For a pair of objects $c_1, c_2 \in Ob(C)$, we define
	\[
	G(F)(c_1, c_2) := C(F(c_1), F(c_2)).
	\]
	The Gabriel factorization of the underlying functor of $F$ gives us the following factorization in $\Cat$:
	\begin{equation*}
	\xymatrix{
		C \ar[rr]^{F \ \ \ \ } \ar[rd]_{\Gamma_F} && D  \\
		&  G(F) \ar[ru]_{\Delta_F}
	}	
	\end{equation*}
	We will show that the functor $\Gamma_F$ is strict symmetric monoidal and $\Delta_F$ is unital symmetric monoidal.
	We define a symmetric monoidal structure on $G(F)$ next which we denote by $(G(F),\boxdot, \gamma)$. For any pair of objects $c_1, c_2 \in Ob(G(F))$, we define $c_1\boxdot c_2 := c_1 \underset{C} \otimes c_2$. For a pair of maps $f_1:c_1 \to c_3$ and $f_2:c_2 \to c_4$, we define $f_1 \boxdot f_2$ to be the following arrow:
	\[
	\xymatrix{
		F(c_1 \underset{C} \otimes c_2) \ar[d]_{\lambda_F}^\cong \ar[r]^{f_1 \boxdot f_2} & F(c_3 \underset{C} \otimes c_4) \ar[d]^{\lambda_F}_\cong \\
		F(c_1) \underset{D} \otimes F(c_2) \ar[r]_{f_1 \underset{D} \otimes f_2} & F(c_3) \underset{D} \otimes F(c_4)
	}
	\]
	It is easy to establish that $-\boxdot-$ is a bifunctor: Let $f_3:c_3 \to c_5$ and $f_4:c_4 \to c_6$ be another pair of arrows in $G(F)$. Now we consider the following commutative diagram:
	\[
	\xymatrix{
		F(c_1 \underset{C} \otimes c_2) \ar[d]_{\lambda_F}^\cong \ar[r]^{f_1 \boxdot f_2} & F(c_3 \underset{C} \otimes c_4) \ar[d]^{\lambda_F}_\cong  \ar[r]^{f_3 \boxdot f_4} & F(c_5 \underset{C} \otimes c_6) \ar[d]^{\lambda_F}_\cong \\
		F(c_1) \underset{D} \otimes F(c_2) \ar[r]_{f_1 \underset{D} \otimes f_2} & F(c_3) \underset{D} \otimes F(c_4) \ar[r]_{f_3 \underset{D} \otimes f_4} & F(c_5) \underset{D} \otimes F(c_6)
	}
	\]
	The above diagram tells us that:
	\[
	(f_3 \boxdot f_4) \circ (f_1 \boxdot f_2) = (f_3 \circ f_1) \boxdot (f_4 \circ f_2)
	\]
	because the composite map in the bottom row of the above diagram namely $(f_3 \underset{D} \otimes f_4) \circ (f_1 \underset{D} \otimes f_2)$ is the same as $(f_3 \circ f_1) \underset{D} \otimes(f_4 \circ f_2)$.
	The tensor product $-\boxdot-$ on $G(F)$ is strictly associative because the object set of $G(F)$ is a monoid and the tensor  product of morphisms is associative because the tensor product of morphisms in $G(F)$ is inherited from that in $D$ which is strictly associative.
	%	 because for each triple of arrows $f_1, f_2, f_3$ in $G(F)$, we have the following equalities:
	%	\[
	%	(f_1 \boxdot f_2) \boxdot f_3 = F(f_1 \otimes f_2 \otimes f_3) = f_1 \boxdot (f_2 \boxdot f_3).
	%	\]
	The symmetry natural transformation $\gamma$ is defined on objects as follows:
	\[
	\gamma_{c_1, c_2} := F(\gamma^C_{c_1, c_2}).
	\]
	Let $f_1:c_1 \to c_3$ and $f_2:c_2 \to c_4$ be a pair of maps in $G(F)$. The following commutative diagram shows us that $\gamma$ is a natural isomorphism:
	
	\begin{equation*}
	\xymatrix@C = 16 mm{
		F(c_1 \underset{C} \otimes c_2) \ar@/^1.5pc/[rrr]^{\gamma_{c_1, c_2}} \ar[r]_{\lambda_F} \ar[d]_{f_1  \boxtimes f_2}  & F(c_1) \underset{D} \otimes F(c_2) \ar[r]_{\gamma^D_{F(c_1), F(c_2)}} \ar[d]_{f_1 \underset{D} \otimes f_2} & \ar[d]^{f_2 \underset{D} \otimes f_1} F(c_3) \underset{D} \otimes F(c_4) \ar[r]_{\lambda_F} &F(c_2 \underset{C} \otimes c_1) \ar[d]^{f_2 \boxtimes f_1} \\
		F(c_3 \underset{C} \otimes c_4) \ar@/_1.5pc/[rrr]_{\gamma_{c_3, c_4}} \ar[r]^{\lambda_F} & F(c_3) \underset{D} \otimes F(c_4) \ar[r]^{\gamma^D_{F(c_3), F(c_4)}} & F(c_4) \underset{D} \otimes F(c_3) \ar[r]^{\lambda_F}  & F(c_4 \underset{C} \otimes c_3)
	}
	\end{equation*}
	which shows that $\gamma$ is a natural transformation.
	The following equalities verifies the symmetry condition: 
	\[
	\gamma_{c_1, c_2} \circ \gamma_{c_2, c_1}  = F(\gamma^C_{c_1, c_2})  \circ F(\gamma^C_{c_2, c_1} ) = 
	F(\gamma^C_{c_1, c_2} \circ \gamma^C_{c_2, c_1}) = id.
	\]
	This defines a permutative category structure on the category $G(F)$.
	Using the definition of the symmetric monoidal structure on $G(F)$, one can easily check that $\Gamma_F$ is a strict symmetric monoidal functor.
	
\end{proof}
	\bibliographystyle{amsalpha}
	\bibliography{HomCLQCat}
	
\end{document}